\newcounter{cnt1}
\newcounter{cnt2}
\newcommand{\blr}{\begin{list}{$(\roman{cnt1})$}
    {\usecounter{cnt1} \setlength{\topsep}{0pt}
        \setlength{\itemsep}{0pt}}}
\newcommand{\bla}{\begin{list}{$($\alph{cnt2}$)$}
    {\usecounter{cnt2} \setlength{\topsep}{0pt}
        \setlength{\itemsep}{0pt}}}
\newcommand{\el}{\end{list}}
\newtheorem{thm}{Theorem}
\newtheorem{cor}[thm]{Corollary}
\newtheorem{ex}[thm]{Example}
\newtheorem{Def}[thm]{Definition}
\newtheorem{prop}[thm]{Proposition}
\newtheorem{rem}[thm]{Remark}
\newcommand{\Rem}{\begin{rem} \rm}
\newcommand{\bdfn}{\begin{Def} \rm}
\newcommand{\edfn}{\end{Def}}
\begin{document}
\large
\title[Computing subdifferential limits  of operators on Banach spaces]{ Subdifferential sets of operators }
\author[Rao]{T. S. S. R. K. Rao}
\address[T. S. S. R. K. Rao]
{Department of Mathematics\\
Shiv Nadar University \\
Delhi (NCR) \\ India,
\textit{E-mail~:}
\textit{srin@fulbrightmail.org}}
\subjclass[2000]{Primary 47 L 05, 46 B28, 46B25  }
 \keywords{
Subdifferential of the norm, Spaces of operators, tensor product spaces
 } \maketitle
\begin{abstract}
Let $X,Y$ be complex Banach spaces. Let ${\mathcal L}(X,Y)$ be the space of bounded operators . An important aspect of understanding differentiability is to study the subdifferential of the norm at a point, say $x \in X$, this is the set, $\{f \in X^\ast:\|f\|=1~,f(x)=\|x\|\}$. See page 7 in \cite{DGZ}. Motivated by recent results of Singla (\cite{S}) in the context of Hilbert spaces, for $T \in {\mathcal L}(X,Y)$, we determine the subdifferential of the operator norm at $T$, $\partial_T = \{\Lambda \in {\mathcal L}(X,Y)^\ast: \Lambda(T) = \|T\|~,~\|\Lambda\|=1\}$. Our approach is based on the `position' of the space of compact operators and the Calkin norm of $T$. Our ideas give a unified approach and extend several results from \cite{S} to the case of $\ell^p$-spaces for $1<p<\infty$. We also investigate the converse, using  the structure of the subdifferential set to decide when the Calkin norm is a strict contraction. As an application of these ideas, we partially solve the open problem of relating the subdifferential of the operator norm at a compact operator $T$ to that of $T(x_0)$, where $x_0$ is a unit vector where $T$ attains its norm.

\end{abstract}
\section { Introduction}
Let $X,Y$ be complex Banach spaces. An interesting problem in operator theory is to determine standard Banach space theoretic sets in ${\mathcal L}(X,Y)$, based on geometric properties of $X,Y$ and their duals. 
\vskip 1em
 Let $X_1$ denote the closed unit ball and let $\partial_e X^\ast_1$ be the set of extreme points of the dual unit ball. {\em In the context of dual spaces, closure operation is always with respect to the weak$^\ast$-topology}. 
For notational simplicity, we call the subdifferential of the norm at a vector $x$, $\partial_x =\{f \in X^\ast: \|f\|=1,~f(x)=\|x\|\}$,  as the subdifferential set, with the vector under consideration appearing in the subscript . This notation is consistent with \cite{S}. Study of these sets in various categories of Banach spaces has long history (an extensive bibliography is available in \cite{S1}) and we only quote the results of Taylor and Werner (see \cite{TW} and \cite{W}) to illustrate.
\vskip 1em

 For $y \in X$, the relation to the directional derivative of the norm at $x$ is given by the formula (see \cite{H} and \cite{DGZ}, pages 7,8)
$$lim_{t\rightarrow 0^+} \frac{\|x+ty\|-\|x\|}{t} = sup \{re(f(y)): f \in \partial_x\}.$$

We recall that $\partial_x$ is an extreme, weak$^\ast$-closed convex subset of $X^\ast_1$ ( called a face, see \cite{A} for basic convexity theory) and therefore the set of extreme points,
 $\partial_e (\partial_x)= \partial_x \cap \partial_e X^\ast_1$. Thus we can replace the functionals in the formula by extreme points. The idea now is to replace the functionals in the RHS set by those coming from an appropriate  subspace of $X$, where the extreme functionals have specific description. Note that the limit on LHS is independent of this manipulation, which is the prime motivation of this approach.
\vskip 1em
For $x^{\ast\ast} \in X^{\ast\ast}$, $y^\ast \in Y^\ast$, $x^{\ast\ast} \otimes y^\ast$ denotes the functional $(x^{\ast\ast} \otimes y^\ast)(T) = x^{\ast\ast}(T^\ast(y^\ast))$. We recall from \cite{HWW} (Theorem VI.1.3) that
for any subspace $J$ of compact operators ${\mathcal K}(X,Y)$ , $\partial_eJ^\ast_1 \subset \{x^{\ast\ast} \otimes y^\ast: x^{\ast\ast} \in \partial_e X^{\ast\ast}_1,~y^\ast \in \partial_e Y^\ast_1\}$ and equality holds if $J$ also contains all finite rank operators. For $A \in {\mathcal L}(X,Y)$, 
such that $d(A, {\mathcal K}(X,Y))<1$, our main result is to determine $\partial_A$ depending on the position of the space of compact operators.
\vskip 1em
We recall from Chapter I of \cite{HWW}, that a closed subspace $J \subset X$ is said to be a $M$-ideal, if there is a linear projection $P: X^\ast \rightarrow X^\ast$ such that $ker(P) = J^\bot$ and $\|x^\ast\|= \|P(x^\ast)\|+\|x^\ast-P(x^\ast)\|$ for all $x^\ast \in X^\ast$. When this happens, functionals in $J^\ast$ have unique norm-preserving extension in $X^\ast$ and $J^\ast$ is canonically identified with the range of $P$ so that $X^\ast = J^\bot \bigoplus_1 J^\ast$, consequently $\partial_e X^\ast_1 = \partial_e J^\ast_1 \cup \partial_e J^\bot_1$.

\vskip 2em
See \cite{HWW} for examples of $M$-ideals from
function spaces, Banach algebras and spaces of operators. In particular Chapter VI deals with examples where ${\mathcal K}(X,Y)$ is a $M$-ideal in ${\mathcal L}(X,Y)$, which includes, ${\mathcal K}(\ell^p)$ in ${\mathcal L}(\ell^p)$ for $1<p<\infty$ and ${\mathcal K}(X,c_0)$ is a $M$-ideal in ${\mathcal L}(X,c_0)$ as well as $\bigoplus_{\infty} X$, for any Banach space $X$. Note also $M$-ideals in $C^\ast$-algebras are precisely closed two-sided ideals. This explains the overlap with \cite{S} and gives possible extensions of the results there beyond $C^\ast$-algebras. See also \cite{S1}.
\vskip 1em
We investigate conditions on $X,Y$ which will allow us to determine the Calkin norm (i.e., the quotient norm of ${\mathcal L}(X,Y)/{\mathcal K}(X,Y)$) of an operator, knowing the behaviour of the subdifferential set. We show that under the general conditions assumed here, if $0$ is not a weak$^\ast$ accumulation point of $\partial_e X^{\ast\ast}_1~,~\partial_e Y^\ast_1$, then the Calkin norm indeed gets determined.
\vskip 1em
Since the subdifferential set $\partial_x$ can be used to evaluate the limit, for $y \in X$, $lim_{t\rightarrow 0^+} \frac{\|x+ty\|-\|x\|}{t} $, a natural question is, when $X$ is canonically embedded in its bidual $X^{\ast\ast}$, if one can replace $y$ in the above formula, by $\tau \in X^{\ast\ast}$, while retaining $\partial_x$
in the RHS?  We partially answer this, using the notion of point of norm-weak upper semi-continuity of the preduality map on $X^{\ast\ast}$ (see the preamble before Proposition 11).
\vskip 1em

We also show that for the special classes of compact operators ${\mathcal K}(X,Y)$, when they form a $M$-ideal in ${\mathcal L}(X,Y)$ and  ${\mathcal L}(X,Y)$ is the  bidual of ${\mathcal K}(X,Y)$ (for example, when $X$ and $Y$ are $\ell^p$-spaces, $1 < p <\infty$), the subdifferential continues to behave well in all higher even order duals of ${\mathcal K}(X,Y)$.
\vskip 1em
Another  interesting  problem is to relate the subdifferential of the operator norm at $T$ to that of $T(x_0)$, where $x_0$ is a point where $T$ attains its norm. Using some recent work from \cite{R1}, we give conditions when this can be solved for a $T \in {\mathcal K}(X,Y)$, for a reflexive space $X$,  $J \subset Y$ is a $M$-ideal.
\vskip 1em
For a compact Hausdorff space $\Omega$, let $C(\Omega,Y)$ be the space of $Y$-valued continuous functions, equipped with the supremum norm. For $\omega \in \Omega$, $y^\ast \in Y^\ast$, $\delta(\omega)\otimes y^\ast$ denotes the functional $(\delta(\omega) \otimes y^\ast)(f)=y^\ast(f(w))$. For a $M$-ideal $J \subset Y$ and $f \in C(\Omega,Y)$, if $\partial_f =\overline{CO}\{\delta(\omega)\otimes y^\ast:y^\ast(f(\omega))=\|f\|,~\omega \in \Omega~,y^\ast\in \partial_e J^\ast_1\}$, (here and elsewhere, $CO$ stands for the convex hull of the set in $\{.\}$), when $0$ is not a weak$^\ast$-closed accumulation point of $\partial_e Y^\ast_1$, there is a $\omega_0 \in \Omega$ such that $\|f(\omega_0)\|=\|f\|$ and $\partial_{f(\omega_0)}=\overline{CO}\{y^\ast \in \partial_e J^\ast_1: y^\ast(f(\omega_0))= \|f\|\}$.
\section{Geometry of the subdifferential set}
We first give a general form of Theorem 1.4 from \cite{S} (see also the Zbl review by this author). Let $J \subset X$ be a closed subspace. We note that if $x \in J$, then relative to $J$, $\partial_x = \overline{CO} \{f \in \partial_e J^\ast_1: f(x)=1\}$, where the closure is in the weak$^\ast$-topology of $J^\ast$.

\vskip 1em
\begin{thm}
Let $J \subset X$ be a $M$-ideal. Let $x \in X$ be a unit vector such that $d(x,J)<1$. Then
$\partial_x = \overline{CO}\{f \in \partial_e J^\ast_1: f(x) = 1\}$. Here the closure is taken in the weak$^\ast$-topology of $X^\ast$. In particular $\partial_x$ has the same description as when $x \in J$.
Thus  for $y \in X$, $$lim_{t\rightarrow 0^+} \frac{\|x+ty\|-\|x\|}{t} = sup \{re(f(y)): f \in \partial_e J^\ast_1,~f(x)=1\}.$$

\end{thm}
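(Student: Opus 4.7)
The plan is to use the $M$-ideal $L$-decomposition $X^\ast = J^\bot \bigoplus_1 J^\ast$ to trap every element of $\partial_x$ in the $J^\ast$-summand, and then run Krein--Milman inside $J^\ast_1$.

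Given $\phi \in \partial_x$, I would decompose $\phi = \phi_1 + \phi_2$ with $\phi_1 \in J^\bot$, $\phi_2 \in P(X^\ast)$, and $\|\phi\| = \|\phi_1\| + \|\phi_2\| = 1$. Since $\phi_1$ annihilates $J$, one has $\phi_1(x) = \phi_1(x-j)$ for every $j \in J$, so $|\phi_1(x)| \leq \|\phi_1\|\, d(x,J)$. Combined with $|\phi_2(x)| \leq \|\phi_2\|$, the identity $\phi(x) = 1$ gives $1 \leq \|\phi_1\|\, d(x,J) + \|\phi_2\|$. Because $d(x,J) < 1$ while $\|\phi_1\| + \|\phi_2\| = 1$, this forces $\phi_1 = 0$. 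Thus $\phi$ lies in $P(X^\ast)$ and, via the canonical identification (the unique norm-preserving extension available from the $M$-ideal property), corresponds to an element of $J^\ast_1$ with $\phi(x) = 1$. The reverse inclusion is trivial, so $\partial_x = \{\phi \in J^\ast_1 : \phi(x) = 1\}$.

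Next, $\partial_x$ is weak$^\ast$-closed and a face of $J^\ast_1$: any convex combination $\phi = \lambda \phi' + (1-\lambda) \phi''$ inside $J^\ast_1$ with $\phi(x) = 1$ forces $\phi'(x) = \phi''(x) = 1$, whence $\|\phi'\| = \|\phi''\| = 1$. Krein--Milman together with Milman's theorem then yield, inside $J^\ast$,
$$\partial_x = \overline{CO}\{f \in \partial_e J^\ast_1 : f(x) = 1\}.$$
To transfer this closure to the $X^\ast$ weak$^\ast$-topology required by the statement, I would invoke weak$^\ast$-continuity of the $L$-projection $P$ (standard in $M$-ideal theory, see \cite{HWW}, Chapter I): then $P(X^\ast)$ is weak$^\ast$-closed in $X^\ast$, and the restriction $P(X^\ast) \to J^\ast$ is a weak$^\ast$-continuous isometric bijection sending the weak$^\ast$-compact Hausdorff ball $P(X^\ast)_1$ onto $J^\ast_1$, hence a homeomorphism between them. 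So the two weak$^\ast$-closures on the right-hand side agree. The directional-derivative formula then follows from the representation quoted in the introduction: the real-affine weak$^\ast$-continuous functional $f \mapsto re(f(y))$ attains its supremum over the weak$^\ast$-compact convex set $\partial_x$ at an extreme point, and those extreme points, being extreme in the face, lie in $\{f \in \partial_e J^\ast_1 : f(x) = 1\}$.

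The only genuine obstacle is this last reconciliation of the two weak$^\ast$-topologies; without it, one identifies $\partial_x$ only inside $J^\ast$. Everything else is a direct application of the $L^1$-decomposition, the strict inequality $d(x,J) < 1$, and standard face/Krein--Milman reasoning.
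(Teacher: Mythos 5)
Your first half is correct and in fact proves more than the paper does at that point: the $\ell^1$-estimate $1=|\phi(x)|\le \|\phi_1\|\,d(x,J)+\|\phi_2\|$ together with $\|\phi_1\|+\|\phi_2\|=1$ and $d(x,J)<1$ forces $\phi_1=0$ for \emph{every} $\phi\in\partial_x$, whereas the paper only treats extreme points of $\partial_x$ via $\partial_e X^\ast_1=\partial_e J^\ast_1\cup\partial_e J^\bot_1$. The genuine gap is your final step. The claim that the $L$-projection $P$ attached to an $M$-ideal is weak$^\ast$-continuous, that $P(X^\ast)$ is weak$^\ast$-closed, and that $P(X^\ast)_1$ is weak$^\ast$-compact in $X^\ast$ is false in general and is not in \cite{HWW}. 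If $P$ were weak$^\ast$-continuous it would be the adjoint of a bounded projection on $X$, so $J$ would be complemented in $X$ (indeed an $M$-summand); but $J=c_0$ is an $M$-ideal in $X=\ell^\infty$ and is not complemented there. In that example $P(X^\ast)=\ell^1\subset(\ell^\infty)^\ast$, and by Goldstine's theorem $\ell^1_1$ is weak$^\ast$-\emph{dense} in $(\ell^\infty)^\ast_1$, so $P(X^\ast)_1$ is neither weak$^\ast$-closed nor weak$^\ast$-compact. Hence the homeomorphism you invoke to reconcile the $\sigma(J^\ast,J)$- and $\sigma(X^\ast,X)$-closures does not exist, and as written the equality with the closure taken in $X^\ast$, which is what the theorem asserts, is not justified.

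The repair is that no change of topology is needed: $\partial_x$ is a weak$^\ast$-compact convex face of $X^\ast_1$ in $\sigma(X^\ast,X)$, so Krein--Milman applied there gives $\partial_x=\overline{CO}(\partial_e\partial_x)$ with the closure already in the topology of $X^\ast$; since $\partial_x$ is a face, $\partial_e\partial_x\subset\partial_e X^\ast_1$, and your own estimate (or the decomposition of extreme points) places these in $\{f\in\partial_e J^\ast_1: f(x)=1\}$, while the reverse inclusion holds because that set sits inside the weak$^\ast$-closed convex set $\partial_x$. This is essentially the paper's argument. If you insist on passing through $J^\ast$, the legitimate route is not via properties of $P$ but via compactness of $\partial_x$: the restriction map $\phi\mapsto\phi|_J$ is $\sigma(X^\ast,X)$-to-$\sigma(J^\ast,J)$ continuous and injective on the range of $P$, hence a homeomorphism of the compact set $\partial_x$ onto its image, which is what actually lets the two closures be compared.
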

\vskip 2em
\begin{proof}
	
Since $J$ is a $M$-ideal, we have, $X^\ast = J^\ast \bigoplus_1 J^\bot$. Thus if the set $\{f \in \partial_eJ^\ast_1: f(x) = 1\}$ is non-empty, then it is contained in $\partial_x$.
\vskip 1em
Since $\partial_x$ is a weak$^\ast$-closed extreme convex subset of $X^\ast_1$, we will show that any extreme point $g \in \partial_x$ is in $J^\ast$ . This completes the proof.
\vskip 1em
Since $g \in \partial_e X^\ast_1$, in view of decomposition of $X^\ast$, suppose $g \in J^\bot$. Now $g(x) = 1$ implies $d(x,J) = 1$.
Therefore, $g \in \partial_eJ^\ast_1$.
\end{proof}
\begin{rem}
Since $y$ is  weak$^\ast$-continuous on $J^\ast$, when $\partial_e J^\ast_1$ is weak$^\ast$-closed in the weak$^\ast$-topology of $J^\ast$, we have
$$lim_{t\rightarrow 0^+} \frac{\|x+ty\|-\|x\|}{t} = re(f_0(y))$$ for some $ f_0 \in \partial_e J^\ast_1$, $f_0(x)=1$. This in particular always happens (under the above hypothesis) when $\partial_e X^\ast_1$ is weak$^\ast$-closed.
	
\end{rem}

\vskip 1em
The following corollary follows immediately from our specific knowledge of extreme points of ${\mathcal K}(X,Y)^\ast_1$. 
\begin{cor}
	Let $X,Y$ be Banach spaces and let $J \subset {\mathcal K}(X,Y) \subset {\mathcal L}(X,Y)$
be a $M$-ideal in ${\mathcal L}(X,Y)$. For any $A \in {\mathcal L}(X,Y)$ such that $d(A,J)<1$, $\partial_A = \overline {CO} \{x^{\ast\ast} \otimes y^\ast \in \partial_eJ^\ast_1: x^{\ast\ast}(A^\ast(y^\ast))=1,~x^{\ast\ast} \in \partial_e X^{\ast\ast}_1~,y^\ast \in \partial_e Y^\ast_1\}$.
\end{cor}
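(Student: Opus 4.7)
The plan is to apply Theorem 1 of the paper directly, with the role of the ambient Banach space played by $\mathcal{L}(X,Y)$ and the role of the $M$-ideal played by $J$. The hypotheses line up exactly: $J$ is an $M$-ideal in $\mathcal{L}(X,Y)$ by assumption, and $A$ is a unit vector (implicit from the condition $x^{\ast\ast}(A^\ast(y^\ast))=1$ on the right-hand side, which forces $\|A\|=1$) satisfying $d(A,J)<1$. Theorem 1 therefore yields
$$\partial_A = \overline{CO}\{\Lambda \in \partial_e J^\ast_1 : \Lambda(A)=1\},$$
with closure taken in the weak$^\ast$-topology of $\mathcal{L}(X,Y)^\ast$.

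The remaining step is to describe $\partial_e J^\ast_1$ concretely. Since $J \subset \mathcal{K}(X,Y)$, the quoted Theorem VI.1.3 from \cite{HWW} gives the inclusion
$$\partial_e J^\ast_1 \subset \{x^{\ast\ast} \otimes y^\ast : x^{\ast\ast} \in \partial_e X^{\ast\ast}_1,\ y^\ast \in \partial_e Y^\ast_1\}.$$
Substituting into the previous display and invoking the defining identity $(x^{\ast\ast}\otimes y^\ast)(A) = x^{\ast\ast}(A^\ast(y^\ast))$ translates the extremality condition $\Lambda(A)=1$ into $x^{\ast\ast}(A^\ast(y^\ast))=1$, which is precisely the claimed description.

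The argument is essentially bookkeeping once Theorem 1 and the HWW representation of extreme functionals on subspaces of compact operators are in hand, so there is no genuine obstacle. The only point worth flagging is that we need only the \emph{inclusion} in Theorem VI.1.3, not the equality case; in particular the corollary applies to every $M$-ideal $J$ of $\mathcal{L}(X,Y)$ that happens to lie inside $\mathcal{K}(X,Y)$, without any finite-rank approximation assumption on $J$ itself.
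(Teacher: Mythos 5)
Your proposal is correct and follows exactly the route the paper intends: the paper derives the corollary by applying Theorem 1 with ambient space $\mathcal{L}(X,Y)$ and $M$-ideal $J$, combined with the HWW description of $\partial_e J^\ast_1$ for subspaces of compact operators. Your added observations (that $\|A\|=1$ is implicit in the normalization, and that only the inclusion of Theorem VI.1.3 is needed) are accurate but do not change the argument.
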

\begin{rem}
	In particular we have $A^\ast$ attains its norm and hence so are all the adjoints of higher order of $A$. More generally we note that if a $B \in {\mathcal L}(X,Y)$, is such that $B^\ast$ attains its norm, then it attains it at an extreme point $y^\ast \in Y^\ast_1$ (see \cite{Lima}). Since $\|B\|= \|B^\ast(y^\ast)\|$, let $x^{\ast\ast} \in \partial_e X^{\ast\ast}_1$ be such that $x^{\ast\ast}(B^\ast(y^\ast))= (x^{\ast\ast} \otimes y^\ast)(B) = \|B\|$. Now if the $J$ above contains all finite rank operators, $x^{\ast\ast} \otimes y^\ast \in \partial_e J^\ast_1$. It may be worth recalling here that the set of operators whose adjoint attains its norm is dense in ${\mathcal L}(X,Y)$, see \cite{Z}. When $X=Y= \ell^p$ for $1<p<\infty$, $J ={\mathcal K}(\ell^p)$, we have for $A \in {\mathcal L}(\ell^p)$ with $d(A,{\mathcal K}(\ell^p))<\|A\|$, $\partial_A = \overline {CO} \{x \otimes y: A(x)(y)= \|A\|,~x \in \ell^p_1 ~,y \in \ell^q_1\}$, thus extending Theorem 1.4 from \cite{S}.
\end{rem}

Let $Z \subset Y$ be a closed subspace. The following proposition identifies subdifferential limits of vectors in $X/Z$. In what follows, by $\pi: X \rightarrow X/Z$ we denote the quotient map. 
\begin{prop}
Let $Z \subset Y \subset X$ be Banach spaces. Suppose $Y$ is a $M$-ideal in $X$ and $x \in X$ is such that $d(x,Y)<d(x,Z)$. Then for $x'\in X$,
$$lim_{t \rightarrow 0^+} \frac{\|\pi(x)+t\pi(x')\|-\|\pi(x)\|}{t}$$
$$= sup\{re(f(x')): f \in \partial_e (Y^\ast \cap Z^\bot)_1~,~f(x)=1\}.$$
\end{prop}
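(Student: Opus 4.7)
The plan is to transport the problem into the quotient space $X/Z$ and apply Theorem 1 there. The first observation is that, because $Z \subseteq Y$, the $M$-ideal $Y$ in $X$ descends to an $M$-ideal $Y/Z$ in $X/Z$. Indeed, if $P : X^\ast \to X^\ast$ is the $L$-projection with kernel $Y^\bot$ coming from the $M$-ideal hypothesis, then for any $f \in Z^\bot$ one has $f - P(f) \in Y^\bot \subseteq Z^\bot$, whence $P(f) \in Z^\bot$. Thus $P|_{Z^\bot}$ is an $L$-projection on $(X/Z)^\ast = Z^\bot$ with kernel $Y^\bot = (Y/Z)^\bot$, producing the $L$-decomposition
\[
Z^\bot = (Y^\ast \cap Z^\bot) \oplus_1 Y^\bot,
\]
and identifying $(Y/Z)^\ast$ isometrically with $Y^\ast \cap Z^\bot \subseteq X^\ast$.

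Next, using $Z \subseteq Y$, one checks $d(\pi(x), Y/Z) = \inf_{y \in Y,\, z \in Z}\|x - y - z\| = d(x, Y)$, which by hypothesis is strictly less than $d(x, Z) = \|\pi(x)\|$. After normalization (replacing $x$ by $x/\|\pi(x)\|$ so that $\|\pi(x)\| = 1$), the unit vector $\pi(x) \in X/Z$ and the $M$-ideal $Y/Z$ satisfy the hypotheses of Theorem 1. Applying Theorem 1, and noting that any $f \in Y^\ast \cap Z^\bot$ acts on $\pi(x')$ by $f(\pi(x')) = f(x')$, one obtains
\[
\partial_{\pi(x)} = \overline{CO}\{f \in \partial_e(Y^\ast \cap Z^\bot)_1 : f(x) = 1\}.
\]

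Finally, $\pi(x')$ is weak$^\ast$-continuous on $(X/Z)^\ast$, so $re(f(\pi(x')))$ is a weak$^\ast$-continuous affine functional on the weak$^\ast$-compact convex set $\partial_{\pi(x)}$, and by Bauer's principle its supremum is attained at an extreme point, which by the description above lies in $\partial_e(Y^\ast \cap Z^\bot)_1$ with $f(x)=1$. Combined with the directional derivative formula from the introduction, this yields the asserted identity. The principal subtlety is the clean isometric identification $(Y/Z)^\ast = Y^\ast \cap Z^\bot$ inside $X^\ast$: without the $M$-ideal hypothesis, $(Y/Z)^\ast$ would only be a quotient of $Y^\ast$ and the extreme-point description would not transfer cleanly.
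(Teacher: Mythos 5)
Your proof is correct and takes essentially the same route as the paper: pass to $X/Z$, use that $Y/Z$ is an $M$-ideal in $X/Z$ with $(Y/Z)^\ast$ identified with $Y^\ast \cap Z^\bot$, check $d(\pi(x),Y/Z) \leq d(x,Y) < d(x,Z) = \|\pi(x)\|$, and apply Theorem 1. The only difference is that you verify the quotient $M$-ideal statement directly by restricting the $L$-projection to $Z^\bot$, where the paper simply cites Proposition I.1.7 of \cite{HWW}.
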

\begin{proof} By Proposition I.1.7 in \cite{HWW}, we have that  $Y/Z$ is a $M$-ideal in $X/Z$. Hence $Z^\bot = Y^\bot \bigoplus_1 (Y^\ast \cap Z^\bot)$. For any $y \in Y$, $\|\pi(x)-\pi(y)\|=\|\pi(x-y)\| \leq \|x-y\|$.
Thus $d(\pi(x),Y/Z) \leq d(x,Y)<d(x,Z)=\|\pi(x)\|$. Now identifying $(Y/Z)^\ast = Y^\ast \cap Z^\bot \subset (X/Z)^\ast$, the conclusion follows from Theorem 1.	
\end{proof}

We next consider the question, suppose $J$ is a $M$-ideal in $X$ and the formula for $\partial_x$ is valid, when is $d(x,J)<1?$. The following example illustrates how it can depend on the choice of the $M$-ideal.
\vskip 1em
\begin{ex}
	Consider $c_0 \subset \ell^\infty$, a well known example of a $M$-ideal (algebraic ideal) of $\ell^\infty$. Let $x \in \ell^\infty$ be a unit vector, with all but finitely many coordinates coming from the unit circle $\Gamma$, the rest from the open disc. Using the identification of  $\ell^\infty = C(\beta({\bf N}))$, we see that $\partial_e(\partial_x) \subset \Gamma \{\delta(\tau): \tau\in \beta({\bf N})\}$. Thus we get $\partial_x = \overline {CO}\{\overline{t}\delta(k): k\in {\bf N}~, t=x(k)~,|x(k)|=1\}$, where $\delta(\tau)$ is the Dirac measure. Clearly $\delta(k) \in \partial_e (\ell^1 = c_0^\ast)_1$. Now for the ideal
	$J = \{\alpha \in \ell^\infty: \alpha(k) = 0~for~k~with ~|x(k)|<1 \})$, $J$ determines $\partial_x$, as well as $c_0$. However $d(x, c_0) = 1$ and $d(x,J)<1$.
\end{ex}
We give two positive answers depending on the weak$^\ast$-closure of $\partial_e J^\ast_1$ in $J^\ast$ and $X^\ast$.
\vskip 1em

\begin{thm}

Suppose for $X,Y,J$ as in Theorem 1, Suppose $\partial_x = \overline{CO}\{f \in \partial_e J^\ast_1: f(x) = 1\}$.
\begin{enumerate}
\item Suppose $0 \notin \overline{\partial_e J^\ast_1}$, closure in the weak$^\ast$-topology of $J^\ast$.	
\item Suppose
$\overline{\partial_e J^\ast_1} \subset [0,1]\partial_e J^\ast_1$, closure taken on the weak$^\ast$-topology of $X^\ast$. 
\end{enumerate}
 then $d(x,J) <1$.
\end{thm}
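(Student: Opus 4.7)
The plan is to argue by contradiction: assume $d(x,J)=1$ and use the assumed representation of $\partial_x$ together with Milman's theorem to reach a conflict with (1) or (2). The idea is to produce a norm-one $g \in J^\bot \cap \partial_x$ that is extreme in $X^\ast_1$; then $g$ will be forced to be weak$^\ast$-approximable by elements of $\partial_e J^\ast_1$, even though it is supported entirely on the $L$-summand $J^\bot$.

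First, I would use the distance formula: $d(x,J)=\|x\|=1$ yields a $g_0 \in J^\bot$ with $\|g_0\|=1$ and $g_0(x)=1$, so $\partial_x \cap J^\bot$ is a non-empty, weak$^\ast$-compact, convex set, and Krein--Milman gives it an extreme point $g$. A short check using that $\partial_x$ is a face of $X^\ast_1$ and the $L_1$-rigidity of the $M$-ideal decomposition $X^\ast = J^\ast \bigoplus_1 J^\bot$ shows that any decomposition $g=\frac{1}{2}(x_1^\ast+x_2^\ast)$ in $X^\ast_1$ forces each $x_i^\ast$ to lie in $\partial_x \cap J^\bot$; extremality of $g$ in the latter then gives $x_1^\ast=x_2^\ast=g$, so $g$ is extreme in $\partial_x$.

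Next, I would invoke Milman's theorem on the assumed identity $\partial_x = \overline{CO}^{\,w^\ast}(E)$ with $E:=\{f \in \partial_e J^\ast_1 : f(x)=1\}$: since $g \in \partial_e \partial_x$, there is a net $(f_\alpha) \subseteq E$ with $f_\alpha \to g$ in the weak$^\ast$-topology of $X^\ast$. For case (1), evaluating at any $y\in J$ gives $f_\alpha(y)\to g(y)=0$, and under the canonical identification of the $M$-summand with $J^\ast$ this says $f_\alpha \to 0$ weak$^\ast$ in $J^\ast$, directly contradicting $0 \notin \overline{\partial_e J^\ast_1}^{\,w^\ast(J^\ast)}$. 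For case (2), the weak$^\ast(X^\ast)$-closure hypothesis yields $g=t\cdot h$ with $t \in [0,1]$ and $h \in \partial_e J^\ast_1$; restricting to $J$ gives $t\cdot h(y)=g(y)=0$ for all $y\in J$, so either $t=0$ (forcing $g=0$, against $\|g\|=1$) or $h$ vanishes on $J$ and hence equals $0$ as an element of $J^\ast$, against $\|h\|_{J^\ast}=1$.

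The main technical obstacle is the careful bookkeeping between two distinct weak$^\ast$-topologies on $\partial_e J^\ast_1$: the intrinsic one on $J^\ast$ (pairing with $J$) versus the subspace topology from $X^\ast$ (pairing with all of $X$). The hypotheses (1) and (2) are formulated relative to these different topologies, and the heart of the argument is translating weak$^\ast$-convergence of $(f_\alpha)$ in $X^\ast$ into the appropriate convergence statement in $J^\ast$ via restriction to $J$, using the $M$-ideal identification to ensure that these two viewpoints on $\partial_e J^\ast_1$ are compatible on bounded sets.
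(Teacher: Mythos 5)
Your proof is correct and follows essentially the same route as the paper: assuming $d(x,J)=1$, producing a norm-one extreme functional in $J^\bot\cap\partial_x$, applying Milman's converse of Krein--Milman to the assumed representation of $\partial_x$, and deriving the contradiction with (1) by restricting the approximating net to $J$, and with (2) by the norm equality forcing $\lambda=1$ (equivalently, your $t$-dichotomy). The only difference is that you spell out the Krein--Milman/face argument producing the extreme point $g$, a step the paper simply asserts by choosing $\tau\in J^\bot_1\cap\partial_e X^\ast_1$ with $\tau(x)=1$.
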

\begin{proof}
To see this assuming  1), if $d(x,J) = 1$, choose $\tau \in J^\bot_1 \cap \partial_e X^\ast$ such that $\tau(x) = 1$ . Since $\tau \in \partial_x$ by Milman's converse of the Krein-Milman theorem, $\tau \in \overline{\{f \in \partial_e J^\ast_1: f(x) = 1\}}$, closure in the weak$^\ast$-topology of $X^\ast$. Let $\{f_{\alpha}\}$ be a net from the set in RHS such that $f_{\alpha} \rightarrow \tau$ in the weak$^\ast$-topology of $X^\ast$. Since $\tau(J) = 0$, we get a contradiction. So $d(x,J) <1$.
\vskip 1em
Now suppose we have 2). Since $\tau \in \overline{\partial_e J^\ast_1}$, for the net $\{f_{\alpha}\}$ as in the proof of 1), we get by hypothesis in 2), $\tau = \lambda h$ for $h \in \partial_e J^\ast_1$, $\lambda \in [0,1]$. Since $\|\tau\|=1=\|h\|$, we get $\lambda =1$. Hence  our claim follows as before.
\end{proof}
\vskip 1em

\begin{rem}
	Examples of spaces that satisfy above extremal conditions include spaces  of the form $\{f \in C(\Omega): f(t_{\alpha})=\lambda_{\alpha} f(s_{\alpha})~for~all~\alpha \in \Delta\}$. Here $\Omega$ is a compact Hausdorff space, $\Delta$ is any index set, $$\{t_{\alpha}\}_{\alpha \in \Delta}~,~\{s_{\alpha}\}_{\alpha \in \Delta} \subset \Omega~ and ~\{\lambda_{\alpha}\}_{\alpha \in \Delta} \subset [-1,1].$$
\end{rem}
\vskip 2em
We next consider conditions specific to spaces of compact operators. We recall that in the case of $\ell^p$-spaces, $0$ is a weak-accumulation point of the unit sphere.
\vskip 2em
\begin{prop}
Suppose  $J\subset {\mathcal K}(X,Y) \subset {\mathcal L}(X,Y)$ is a $M$-ideal in ${\mathcal L}(X,Y)$ and it  contains all finite rank operators. Assume further, $0$ is not a weak$^\ast$ extreme point of $\partial_e X^{\ast\ast}_1$ and $\partial_e Y^\ast_1$. Let $A \in {\mathcal L}(X,Y),~\|A\|=1$ and
$\partial_A = \overline {CO} \{x^{\ast\ast} \otimes y^\ast : x^{\ast\ast}(A^\ast(y^\ast))=1~,~x^{\ast\ast} \in \partial_e X^{\ast\ast}_1,~y^\ast \in \partial_eY^\ast_1\}$. Then $d(A,J) < 1$.
\end{prop}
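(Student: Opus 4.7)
The strategy is to reduce the claim to Theorem 7(1), applied with the $M$-ideal $J \subset {\mathcal L}(X,Y)$ and the unit vector $A$. Since $J$ contains all finite rank operators, the result of \cite{HWW} cited in the preamble gives the identification $\partial_e J^\ast_1 = \{x^{\ast\ast}\otimes y^\ast : x^{\ast\ast} \in \partial_e X^{\ast\ast}_1,~y^\ast \in \partial_e Y^\ast_1\}$. Under this identification, the hypothesized formula for $\partial_A$ is precisely of the form $\overline{CO}\{f \in \partial_e J^\ast_1 : f(A) = 1\}$ demanded in Theorem 7, so it remains only to verify condition $(1)$ there, namely that $0 \notin \overline{\partial_e J^\ast_1}$ in the weak$^\ast$-topology of $J^\ast$; the conclusion $d(A,J)<1$ will then be immediate.

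For the verification, I would argue by contradiction. Suppose $\{x^{\ast\ast}_\alpha \otimes y^\ast_\alpha\}$ is a net in $\partial_e J^\ast_1$ converging to $0$ in the weak$^\ast$-topology of $J^\ast$. Using weak$^\ast$-compactness of $X^{\ast\ast}_1$ and $Y^\ast_1$, pass to a subnet so that $x^{\ast\ast}_\alpha \to x^{\ast\ast}_0 \in X^{\ast\ast}_1$ and $y^\ast_\alpha \to y^\ast_0 \in Y^\ast_1$ in the respective weak$^\ast$-topologies. If $x^{\ast\ast}_0 = 0$, then since extreme points of the unit ball have norm one and hence $0 \notin \partial_e X^{\ast\ast}_1$, we see that $0$ is a weak$^\ast$-accumulation point of $\partial_e X^{\ast\ast}_1$, contradicting the hypothesis; the case $y^\ast_0 = 0$ is handled the same way. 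Otherwise both limits are non-zero. Evaluating the original net on a rank-one operator $u \otimes v \in J$ (with $u \in X^\ast$, $v \in Y$) gives $(x^{\ast\ast}_\alpha \otimes y^\ast_\alpha)(u\otimes v) = x^{\ast\ast}_\alpha(u)\,y^\ast_\alpha(v) \to x^{\ast\ast}_0(u)\,y^\ast_0(v)$ by the boundedness of the scalar factors, while weak$^\ast$-convergence to $0$ in $J^\ast$ forces this limit to vanish. Picking $v$ with $y^\ast_0(v) \neq 0$ then yields $x^{\ast\ast}_0(u) = 0$ for every $u \in X^\ast$, so $x^{\ast\ast}_0 = 0$, a contradiction.

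The main technical point is the joint limit argument for the elementary tensors: one must pass to weak$^\ast$-convergent subnets of each factor separately and then combine the two accumulation hypotheses with the product form of $\partial_e J^\ast_1$. The assumption that $J$ contains all finite rank operators is essential, both because it delivers the explicit description of $\partial_e J^\ast_1$ needed to recognize the form of $\partial_A$, and because it provides the rank-one test operators $u \otimes v \in J$ that separate the $X^{\ast\ast}$ and $Y^\ast$ factors.
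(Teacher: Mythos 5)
Your proof is correct and follows essentially the same route as the paper: reduce to Theorem 7(1) by showing $0 \notin \overline{\partial_e J^\ast_1}$ (weak$^\ast$-closure in $J^\ast$), argue by contradiction with a net of elementary tensors, pass to weak$^\ast$-convergent subnets whose limits are nonzero thanks to the accumulation-point hypotheses, and obtain the contradiction from the rank-one operators contained in $J$. The only (harmless) difference is that you test the net directly on rank-one operators $u \otimes v$, where the joint limit is just a product of convergent scalar nets, whereas the paper uses compactness of a general $T \in J$ (norm convergence of $T^\ast(y^\ast_{\alpha})$) to identify the weak$^\ast$ limit on all of $J$ before restricting to rank-one operators; your variant slightly streamlines that step.
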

\vskip 1em
\begin{proof}
We will show that $0 \notin \overline{\partial_e J^\ast_1}$. If not, in view of the description of the extreme points, we get two nets indexed by the same set, $\{x^{\ast\ast}_{\alpha}\} \subset \partial_e X^{\ast\ast}_1$ and $\{y^\ast_{\alpha}\} \subset \partial_e Y^\ast_1$ such that the functionals $x^{\ast\ast}_{\alpha} \otimes y^\ast_{\alpha} \rightarrow 0$  in the weak$^\ast$-topology of $J^\ast$. Using weak$^\ast$-compactness, by going through appropriate subnets, if necessary, we may and do assume that $x^{\ast\ast}_{\alpha} \rightarrow x^{\ast\ast}$ and $y^\ast_{\alpha} \rightarrow y^\ast$ in the weak$^\ast$-topology, for some $x^{\ast\ast} \neq 0 \neq y^\ast$. For any $T \in J$, since $T$ is compact, $T^\ast(y^\ast_{\alpha}) \rightarrow T^\ast(y^\ast)$ in the norm. Consequently $x^{\ast\ast}_{\alpha}(T^\ast(y^\ast_{\alpha})) \rightarrow x^{\ast\ast}(T^\ast(y^\ast))$. Therefore $x^{\ast\ast}_{\alpha} \otimes y^\ast_{\alpha} \rightarrow x^{\ast\ast}\otimes y^\ast$ in the weak$^\ast$-topology of $J^\ast$. So $x^{\ast\ast} \otimes y^\ast = 0$ on $J$ . A contradiction since $J$ contains all operators of rank one.
\end{proof}
\vskip 2em

We next apply this to the situation when $X$ is a $M$-ideal in the canonical embedding, in its bidual $X^{\ast\ast}$, the so called $M$-embedded spaces. See Chapter III of \cite{HWW}, for examples among spaces of operators, particularly when for reflexive Banach spaces $X,Y$ ${\mathcal L}(X,Y)$ is the canonical bidual of ${\mathcal K}(X,Y)$ (like in the case of Hilbert spaces and $\ell^p$-spaces).
\vskip 1em
\begin{prop}

Suppose $X$ is a $M$-ideal in $X^{\ast\ast}$. Let $\tau \in X^{\ast\ast}$ be such that $d(\tau,X) < \|\tau\|$, then $\tau$ attains its norm. The set $\partial \tau$ has the same description in all higher odd ordered duals of $X$.
\end{prop}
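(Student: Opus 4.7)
The plan is to deduce both statements directly from Theorem 1, applied once for $X\subset X^{\ast\ast}$ and then iteratively along the tower of biduals. Write $X^{(n)}$ for the $n$-th iterated dual. After normalizing $\|\tau\|=1$, the hypothesis puts us in the setting of Theorem 1 with $J=X$ inside $X^{\ast\ast}$ and $d(\tau,X)<1$, which yields
\[
\partial_\tau=\overline{CO}\{f\in\partial_e X^\ast_1:\,f(\tau)=1\},
\]
the closure being weak$^\ast$ in $X^{\ast\ast\ast}$. Since the proof of Theorem 1 exhibits this extremal set as non-empty, there is $f\in X^\ast_1$ with $f(\tau)=1$; that is, $\tau$ attains its norm. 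This disposes of the first assertion.

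For the second assertion I will invoke the standard fact (Chapter III of \cite{HWW}) that $M$-embedding is preserved under taking biduals. Inductively, every even iterated dual $X^{(2k)}$ is an $M$-ideal in $X^{(2k+2)}$, and the associated $\ell^1$-decompositions accumulate to give
\[
X^{(2k+3)}=X^\ast\oplus_1 X^\perp\oplus_1 (X^{\ast\ast})^\perp\oplus_1\cdots\oplus_1 (X^{(2k)})^\perp,
\]
where each annihilator sits inside the next larger odd dual. Extreme points of $X^{(2k+3)}_1$ are then distributed among the sets $\partial_e X^\ast_1,\ \partial_e X^\perp_1,\ \ldots,\ \partial_e(X^{(2k)})^\perp_1$.

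Applying Theorem 1 once more, now with $J=X^{(2k)}$ inside $X^{(2k+2)}$ (note $d(\tau,X^{(2k)})=0<1$ because $\tau\in X^{\ast\ast}\subset X^{(2k)}$), I obtain
\[
\partial_\tau=\overline{CO}\{F\in\partial_e X^{(2k+1)}_1:\,F(\tau)=1\}
\]
in the weak$^\ast$-topology of $X^{(2k+3)}$. The crux is to show that only the summand $X^\ast$ contributes. If $F\in (X^{(2j)})^\perp$ with $j\geq 1$, then $F$ vanishes on $\tau\in X^{\ast\ast}\subset X^{(2j)}$, so $F(\tau)=0\neq 1$; and if $F\in X^\perp$, the identity $F(\tau)=F(\tau-x)$ for every $x\in X$ forces $|F(\tau)|\leq d(\tau,X)<1$. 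Hence every extreme $F$ satisfying $F(\tau)=1$ lies in $\partial_e X^\ast_1$, and the description coincides with the one from the base case.

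The principal obstacle is the bookkeeping of the iterated $M$-embedding and the resulting tower of annihilators; once the decomposition is in hand, the reduction to $\partial_e X^\ast_1$ via the distance estimate is essentially one line.
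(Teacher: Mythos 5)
Your first paragraph is fine and is essentially the paper's own argument: the $L$-decomposition $X^{\ast\ast\ast}=X^\ast\oplus_1 X^\perp$ together with $d(\tau,X)<\|\tau\|$ forces every extreme point of $\partial_\tau$ into $X^\ast$, whence $\tau$ attains its norm. The second part, however, rests on a false premise. You invoke as a ``standard fact'' that $M$-embeddedness is preserved under taking biduals, i.e.\ that every even dual $X^{(2k)}$ is an $M$-ideal in $X^{(2k+2)}$. This is not in Chapter III of \cite{HWW} and it is false in general: take $X=c_0$, which is $M$-embedded, and note that $X^{\ast\ast}=\ell^\infty$ is a non-reflexive dual space that is not Asplund (it contains $\ell^1$), whereas every $M$-embedded space is Asplund and a non-reflexive $M$-embedded space is never a dual space (see \cite{HWW}, Chapter III). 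So $\ell^\infty$ is not an $M$-ideal in its bidual, your tower decomposition $X^{(2k+3)}=X^\ast\oplus_1 X^\perp\oplus_1(X^{\ast\ast})^\perp\oplus_1\cdots$ does not exist in general, and the second application of Theorem 1 with $J=X^{(2k)}$ inside $X^{(2k+2)}$ is not justified.

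The correct ingredient, which the paper cites from \cite{R}, is different: if $X$ is an $M$-ideal in $X^{\ast\ast}$, then $X$ \emph{itself} (not $X^{(2k)}$) remains an $M$-ideal in every higher even dual $X^{(2k)}$ under the canonical embedding. With that result the argument is exactly your first paragraph repeated: since the canonical embeddings $X\subset X^{\ast\ast}\subset X^{(2k)}$ are isometric, $d_{X^{(2k)}}(\tau,X)=d_{X^{\ast\ast}}(\tau,X)<\|\tau\|$, so Theorem 1 applied with $J=X$ inside $X^{(2k)}$ gives $\partial_\tau=\overline{CO}\{f\in\partial_e X^\ast_1: f(\tau)=\|\tau\|\}$ in each odd dual $X^{(2k+1)}$, which is the asserted invariance of the description. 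Your distance estimate $|F(\tau)|\le d(\tau,X)<1$ for $F\in X^\perp$ is the right local step, but the global bookkeeping must run through the $M$-ideal $X$ in $X^{(2k)}$, not through an iterated $M$-embedding of the even duals.
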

\vskip 1em
\begin{proof}
Since $X$ is a $M$-ideal in $X^{\ast\ast}$, the canonical projection $\Lambda \rightarrow \Lambda|X$ is a $L$-projection, thus $X^{\ast\ast\ast} = X^{\ast} \bigoplus_1 X^\bot$ for the canonical embedding of $X^\ast$ in $X^{\ast\ast\ast}$. We see that the hypothesis implies there is a $g \in \partial_e X^\ast_1$ such that $\tau(g) =1$. The main result of \cite{R} says that under the hypothesis, $X$ continues to be a $M$-ideal of all higher even ordered duals of $X$. Hence the conclusion follows.
\end{proof}
Let $X$ be canonically embedded in its bidual, $X^{\ast\ast}$. Another interesting question is to consider when the equality,
$$lim_{t\rightarrow 0^+} \frac{\|x+t\tau\|-\|x\|}{t} = sup \{re(\tau(f)): f \in \partial_x\}$$ holds for any $\tau \in X^{\ast\ast}$.
\vskip 1em

Our next proposition answers this question using the notion of the preduality mapping on $X^{\ast\ast}$ (see \cite{GI}). For $x \in X$ considered as an element of $X^{\ast\ast}$, we let $\partial'_x = \{\tau \in X^{\ast\ast\ast}_1: \tau(x) = \|\tau\|\}$. 
\vskip 1em

 We recall that for
a norm attaining functional $\Lambda \in X^{\ast\ast}$, attaining its norm at $f_0 \in X^{\ast}_1$ is a point of norm-weak upper semi-continuity for the preduality map, if for any weak neighbourhood $V$ of $0$ in $X^\ast$, there is a $\delta >0$ such that for any $\Lambda' \in X^{\ast\ast}$, $\|\Lambda'-\Lambda\|< \delta$, $\{g \in X^\ast_1:\Lambda'(g) = \|\Lambda'\|\} \subset \{h \in X^\ast_1: \Lambda(h) = \|\Lambda\|\}+V$.
Part of our next proposition is perhaps folklore, as we are unable to find a reference, we include its easy proof. It is well known that any unitary in a $C^\ast$- algebra satisfies the condition assumed in the following proposition.
\begin{prop}
Let $x \in X$ be a point of norm-weak upper semi-continuity for the preduality mapping of $X^{\ast\ast}$. 	Then $$lim_{t\rightarrow 0^+} \frac{\|x+t\tau\|-\|x\|}{t} = sup \{re(\tau(f)): f \in \partial_x\}$$ for any $\tau \in X^{\ast\ast}$. 
\end{prop}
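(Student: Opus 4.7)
The easy direction $lim_{t\rightarrow 0^+}(\|x+t\tau\|-\|x\|)/t \geq sup\{re(\tau(f)): f \in \partial_x\}$ is immediate: for $f \in \partial_x$ one has $\|x+t\tau\| \geq re(f(x)+t\tau(f)) = \|x\|+t\,re(\tau(f))$, so the quotient dominates $re(\tau(f))$, and the one-sided limit itself exists by convexity of $t \mapsto \|x+t\tau\|$. The work is in the reverse inequality.

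The strategy for the reverse inequality is to approximate $x+t\tau \in X^{\ast\ast}$, which need not attain its norm on $X^\ast_1$, by a nearby element that does. Bishop-Phelps applied to the Banach space $X^\ast$ (whose dual is $X^{\ast\ast}$) makes the set of norm-attaining functionals dense in $X^{\ast\ast}$; for each small $t>0$ I choose $\tilde\Lambda_t \in X^{\ast\ast}$ with $\|\tilde\Lambda_t - (x+t\tau)\|<t^2$ and pick $\tilde f_t \in X^\ast_1$ with $\tilde f_t(\tilde\Lambda_t) = \|\tilde\Lambda_t\|$. A short estimate using $|\tilde f_t(\tilde\Lambda_t-(x+t\tau))|<t^2$ and $\|\tilde\Lambda_t\|\geq\|x+t\tau\|-t^2$ gives $re(\tilde f_t(x+t\tau))>\|x+t\tau\|-2t^2$; splitting the real part as $re(\tilde f_t(x))+t\,re(\tau(\tilde f_t))$ and using $re(\tilde f_t(x))\leq\|x\|$ yields
\[
re(\tau(\tilde f_t)) > \frac{\|x+t\tau\|-\|x\|}{t}-2t.
\]

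To match up with $\partial_x$, note $\|\tilde\Lambda_t-x\|\leq t^2+t\|\tau\|\to 0$. Fix $\eta>0$ and apply the norm-weak USC hypothesis to the weak neighbourhood $V_\eta = \{g \in X^\ast : |\tau(g)|<\eta\}$ of $0$ (weakly open because $\tau \in X^{\ast\ast}$ is continuous on $X^\ast$ in the weak topology $\sigma(X^\ast,X^{\ast\ast})$): there exists $\delta>0$ such that $\|\tilde\Lambda_t-x\|<\delta$ forces $\tilde f_t \in \partial_x+V_\eta$. Hence, for $t$ small, $\tilde f_t = h_t+v_t$ with $h_t \in \partial_x$ and $|\tau(v_t)|<\eta$, so $re(\tau(\tilde f_t))\leq sup\{re(\tau(h)): h \in \partial_x\}+\eta$. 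Combining with the previous display and letting first $t \rightarrow 0^+$ and then $\eta \rightarrow 0$ produces the reverse inequality, completing the proof.

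The main obstacle is lining up the pieces correctly: Bishop-Phelps must be applied in the pair $(X^\ast,X^{\ast\ast})$ rather than $(X,X^\ast)$, since the vector being perturbed sits in $X^{\ast\ast}$ while the required norm-attainer must live in $X^\ast_1$; the weak neighbourhood $V_\eta$ must be indexed by $\tau$ itself so that the residual term $re(\tau(v_t))$ is controlled; and since the paper works in the complex setting, one must stay with real parts throughout, because $\tilde f_t(\tilde\Lambda_t-(x+t\tau))$ is in general not real.
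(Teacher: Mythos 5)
Your argument is correct, but it takes a genuinely different route from the paper. The paper's proof is two lines: it quotes Lemma 2.2 of Godefroy--Indumathi to conclude that the norm-weak u.s.c.\ hypothesis makes $\partial_x$ weak$^\ast$-dense in $\partial'_x=\{F\in X^{\ast\ast\ast}_1: F(x)=\|x\|\}$, and then applies the standard directional-derivative formula at the level of $X^{\ast\ast}$ (where the supremum is over $\partial'_x$), using that evaluation at $\tau\in X^{\ast\ast}$ is weak$^\ast$-continuous on $X^{\ast\ast\ast}$, so the supremum over the dense subset $\partial_x$ is the same. You instead bypass the third dual and the citation entirely: after the easy inequality, you perturb $x+t\tau$ via Bishop--Phelps in the pair $(X^\ast,X^{\ast\ast})$ to a nearby norm-attaining element $\tilde\Lambda_t$, feed its attaining functional $\tilde f_t$ into the definition of norm-weak upper semi-continuity with the weak neighbourhood $V_\eta$ cut out by $\tau$ itself, and pass to the limit. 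The estimates check out (the Bishop--Phelps step is exactly what is needed, since the u.s.c.\ condition is vacuous at non-norm-attaining $\Lambda'$, and your $V_\eta$ is indeed a $\sigma(X^\ast,X^{\ast\ast})$-neighbourhood of $0$). What each approach buys: the paper's version is shorter and isolates the density statement $\overline{\partial_x}^{w^\ast}\supseteq\partial'_x$ as the real content of the hypothesis, which it reuses conceptually elsewhere; yours is self-contained and quantitative, and in effect reproves the relevant portion of the Godefroy--Indumathi lemma, whose own proof rests on the same Bishop--Phelps idea.
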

\begin{proof}
It follows from Lemma 2.2 in \cite{GI} that the hypothesis implies $\partial_x$ is weak$^\ast$-dense (in the weak$^\ast$-topology of $X^{\ast\ast\ast}$) in $\partial'_x$. Since $\tau$ is weak$^\ast$-continuous on $X^{\ast\ast\ast}$, the conclusion follows.

\end{proof}
\begin{rem}
	We note that if $J \subset X$ is a $M$-ideal, then $J$ need not be a $M$-ideal in $X^{\ast\ast}$ ( it is easy to see that in $C([0,1])$, the ideal $J = \{f \in C([0,1]): f([0,\frac{1}{2}])=0\}$ is not an ideal in $C([0,1])^{\ast\ast}$. 
\end{rem}
\section{Application}
As an application of the previous set of ideas, we formulate the compact operator version of the operator norm subdifferential problem and provide a partial solution.
\vskip 1em
Let $X$ be a reflexive Banach spaces and suppose $Y$ has the metric approximation property. Suppose $J \subset Y$ be a $M$-ideal and let $T \in {\mathcal K}(X,Y)$ be such that $\partial_T = \overline {CO} \{x\otimes y^\ast: y^\ast(T(x))= (x \otimes y^\ast)(T)=\|T\|,~~x \in \partial_e X_1,~y^\ast \in \partial_e J^\ast_1\}$. An interesting question that arises from the previous section is that, when can one get a $x_0 \in \partial_e X_1$ such that $\|T\|=\|T(x_0)\|$ and $\partial_{T(x_0)}=\overline{CO}\{y^\ast\in \partial_e J^\ast_1: y^\ast(T(x_0))= \|T(x_0)\|\}$.
\vskip 1em
Since $Y$ and hence $J$ have the metric approximation property, we have that ${\mathcal K}(X,J)$ is a $M$-ideal in ${\mathcal K}(X,Y)$ (see \cite{HWW} Proposition VI.3.1).  Thus from the structure of extreme points of the dual unit ball of spaces of compact operators, we see that the description of $\partial_T$, as before, is in terms of the extreme points of the dual unit ball of the $M$-ideal ${\mathcal K}(X,J)$. 
\vskip 1em
\vskip 1em
	In what follows we will be using a minimax formula for compact operators. This author has recently proved (\cite{R1}), Theorem 1) the validity of this when $J$ is a $L^1$-predual space (i.e., $J^\ast$ is isometric to a $L^1(\mu)$-space, for example, the spaces considered in Remark 8).

\begin{thm}
	Let $X,Y,J$ be as above. Assume that $X$ is a separable space, $0$ is not a weak accumulation point of $\partial_e X_1$ and $0$ is not a weak$^\ast$-accumulation point of $\partial_e Y^\ast_1$ and suppose the compact operator $T$ satisfies the minimax formula, $d(T, {\mathcal K}(X,J))= sup\{d(T(x),J): x \in X_1\}$. If $\partial_T = \overline {CO}\{x\otimes y^\ast: (x \otimes y^\ast)(T) = \|T\|,~x \in \partial_e X_1~,~y^\ast\in \partial_e J^\ast_1\}$, then there is a $x_0 \in \partial_e X_1$ such that, $\|T\|=\|T(x_0)\|$, $\partial_{T(x_0)} = \overline{CO}\{y^\ast \in \partial_e Y^\ast_1: y^\ast(T(x_0))=\|T(x_0)\|\}$.
	\end{thm}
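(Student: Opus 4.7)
The plan is to reduce the desired description of $\partial_{T(x_0)}$ to a direct application of Theorem 1 in $Y$ at the point $T(x_0)$, once I have secured two things: (i) the estimate $\sup_{x \in X_1} d(T(x), J) < \|T\|$ via the minimax formula, and (ii) the existence of an $x_0 \in \partial_e X_1$ at which $T$ attains its norm.

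For (i), since $Y$ has the metric approximation property, $\mathcal{K}(X, J)$ is an $M$-ideal in $\mathcal{K}(X, Y)$ by Proposition VI.3.1 of \cite{HWW}; and since $X$ is reflexive and $J$ has the metric approximation property, the extreme points of $\mathcal{K}(X, J)^\ast_1$ are precisely the tensors $x \otimes y^\ast$ with $x \in \partial_e X_1$ and $y^\ast \in \partial_e J^\ast_1$. The hypothesis on $\partial_T$ is then exactly the Theorem 1 formula relative to the $M$-ideal $\mathcal{K}(X, J) \subset \mathcal{K}(X, Y)$, so I would invoke Theorem 5(1). To verify its non-accumulation hypothesis I plan to repeat the argument of Proposition 9: given a net $x_\alpha \otimes y^\ast_\alpha \to 0$ weak$^\ast$ in $\mathcal{K}(X, J)^\ast$, pass to subnets so that $x_\alpha \to x$ weakly in $X$ (using reflexivity) and $y^\ast_\alpha \to y^\ast$ weak$^\ast$ in $Y^\ast$; the hypotheses on $0$, together with the inclusion $\partial_e J^\ast_1 \subset \partial_e Y^\ast_1$, force $x \neq 0 \neq y^\ast$; compactness of each $S \in \mathcal{K}(X, J)$ then converts the weak convergence of $x_\alpha$ into norm convergence of $S(x_\alpha)$, so that $x \otimes y^\ast$ vanishes on $\mathcal{K}(X, J)$, which is absurd on any rank-one operator. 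Hence $d(T, \mathcal{K}(X, J)) < \|T\|$, and the assumed minimax formula delivers (i).

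For (ii), $\partial_T \neq \emptyset$ by Hahn--Banach, so the hypothesis on $\partial_T$ forces the underlying set in its closed convex hull description to be nonempty; any $x_0 \otimes y^\ast_0$ in that set gives $x_0 \in \partial_e X_1$, $y^\ast_0 \in \partial_e J^\ast_1$, and $y^\ast_0(T(x_0)) = \|T\|$, which immediately forces $\|T(x_0)\| = \|T\|$.

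Combining (i) and (ii) yields $d(T(x_0), J) \le \sup_{x \in X_1} d(T(x), J) < \|T\| = \|T(x_0)\|$, so Theorem 1 applied in $Y$ to the $M$-ideal $J$ and the unit vector $T(x_0)/\|T(x_0)\|$ produces $\partial_{T(x_0)} = \overline{CO}\{y^\ast \in \partial_e J^\ast_1 : y^\ast(T(x_0)) = \|T(x_0)\|\}$. The replacement of $\partial_e J^\ast_1$ by $\partial_e Y^\ast_1$ in the claimed conclusion uses the $M$-ideal decomposition $\partial_e Y^\ast_1 = \partial_e J^\ast_1 \cup \partial_e J^\bot_1$: no $y^\ast \in \partial_e J^\bot_1$ can satisfy $y^\ast(T(x_0)) = \|T(x_0)\|$, since such a $y^\ast$ vanishes on $J$ and would force $\|T(x_0)\| \le d(T(x_0), J)$, contradicting (i). I expect the main technical obstacle to be the non-accumulation step in (i); it succeeds precisely because reflexivity of $X$ allows the extraction of weak limits and compactness of operators in $\mathcal{K}(X, J)$ promotes weak convergence of $x_\alpha$ into norm convergence when evaluating the elementary tensors.
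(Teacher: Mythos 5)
Your proposal is correct and follows essentially the same route as the paper's own proof: the paper likewise obtains $d(T,\mathcal{K}(X,J))<\|T\|$ by running the Proposition 9 argument for the $M$-ideal $\mathcal{K}(X,J)\subset\mathcal{K}(X,Y)$ (i.e.\ Theorem 5(1)), then applies the assumed minimax formula to get $d(T(x_0),J)<\|T(x_0)\|$ and the $M$-ideal description (Theorem 1) of $\partial_{T(x_0)}$. The only minor differences are that the paper produces $x_0$ from reflexivity of $X$ and compactness of $T$ rather than from nonemptiness of the defining set of $\partial_T$, and it leaves implicit the passage from $\partial_e J^\ast_1$ to $\partial_e Y^\ast_1$ in the final description, which you spell out via $\partial_e Y^\ast_1=\partial_e J^\ast_1\cup\partial_e J^\bot_1$.
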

\begin{proof}
	Since $X$ is reflexive and $T$ is compact, let $x_0 \in \partial_e X_1$ be such that $\|T\|= \|T(x_0)\|$. Since ${\mathcal K}(X,J)$ is a $M$-ideal in ${\mathcal K}(X,Y)$, by our arguments given in the proof of Proposition 9, and our hypothesis implies, $d(T, {\mathcal K}(X,J))<\|T\|$. Now by the minimax formula, $d(T(x_0),J) < \|T\|=\|T(x_0)\|$. Now since $J$ is a $M$-ideal, we get the description for $\partial_{T(x_0)}$.
\end{proof}
For a compact Hausdorff space $\Omega$, let $C(\Omega,Y)$ be the space of $Y$-valued continuous functions, equipped with the supremum norm. The minimax formula above was inspired by a classical result of Light and Cheney (see \cite{LC}), for a closed subspace $J \subset Y$ and $f \in C(\Omega,Y)$, $$d(f,C(\Omega,J))= sup\{d(f(w),J): w \in \Omega\}.$$
Analogous to the above theorem, if $J$ is a $M$-ideal in $Y$, then $C(\Omega,J)$ is a $M$-ideal in $C(\Omega,Y)$ (see \cite{HWW}). The following corollary, which assumes these conditions,  is now easy to prove. For $\omega \in \Omega$, $\delta(\omega)$ denotes the Dirac measure at $\omega$.
\begin{cor}
	Suppose $0$ is not a weak$^\ast$-accumulation point of $\partial_e Y^\ast_1$. Let $f \in C(\Omega,Y)$ and suppose, $\partial_f = \overline{CO} \{\delta(\omega)\otimes y^\ast: y^\ast(f(\omega))= \|f\|~,~\omega \in \Omega~,~y^\ast \in \partial_e J^\ast_1\}$. There exists a $\omega_0 \in \Omega$ such that $\|f\|=\|f(\omega_0)\|$, $\partial_{f(\omega_0)}= \overline{CO}\{y^\ast \in \partial_eJ^\ast_1: y^\ast(f(\omega_0))=\|f(\omega_0)\|\}$.
	
\end{cor}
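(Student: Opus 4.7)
The plan is to mirror the preceding theorem, with the $M$-ideal $C(\Omega,J)\subset C(\Omega,Y)$ playing the role of ${\mathcal K}(X,J)\subset{\mathcal K}(X,Y)$ and the classical Light--Cheney formula $d(f,C(\Omega,J))=\sup_{\omega\in\Omega}d(f(\omega),J)$, recalled just before the corollary, replacing the compact-operator minimax hypothesis. Since $\omega\mapsto\|f(\omega)\|$ is continuous on the compact space $\Omega$, pick $\omega_0\in\Omega$ with $\|f(\omega_0)\|=\|f\|$.

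The heart of the argument is the strict contraction $d(f,C(\Omega,J))<\|f\|$. Since $J$ is an $M$-ideal in $Y$, $C(\Omega,J)$ is an $M$-ideal in $C(\Omega,Y)$, and the extreme points of $C(\Omega,J)^\ast_1$ are precisely the tensor functionals $\delta(\omega)\otimes y^\ast$ with $\omega\in\Omega$ and $y^\ast\in\partial_e J^\ast_1$. The hypothesis on $\partial_f$ exhibits it as the weak$^\ast$-closed convex hull of a subset of these extreme points, so I would apply clause (1) of Theorem 7 with $C(\Omega,J)\subset C(\Omega,Y)$ in place of $J\subset X$; the remaining task is to verify that $0$ is not a weak$^\ast$-accumulation point of $\partial_e C(\Omega,J)^\ast_1$ in $C(\Omega,J)^\ast$.

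I would carry out this verification by the scheme of Proposition 9: assume $\delta(\omega_\alpha)\otimes y^\ast_\alpha\to 0$ weak$^\ast$ on $C(\Omega,J)$; by compactness of $\Omega$ and weak$^\ast$-compactness of $Y^\ast_1$, pass to subnets with $\omega_\alpha\to\omega_\ast$ in $\Omega$ and $y^\ast_\alpha\to y^\ast_\ast$ weak$^\ast$ in $Y^\ast$; use the hypothesis that $0$ is not a weak$^\ast$-accumulation point of $\partial_e Y^\ast_1$ to force $y^\ast_\ast\neq 0$; and combine the norm convergence $g(\omega_\alpha)\to g(\omega_\ast)$ (valid for every $g\in C(\Omega,J)$ by continuity) with the bounded weak$^\ast$-convergence of the $y^\ast_\alpha$ to identify the weak$^\ast$-limit on $C(\Omega,J)$ as $\delta(\omega_\ast)\otimes y^\ast_\ast$. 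Forcing this to vanish gives $y^\ast_\ast|_J=0$, which combined with $y^\ast_\ast\neq 0$ and the $M$-ideal identification of $J^\ast$ inside $Y^\ast$ yields the contradiction.

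With the strict contraction in hand, the Light--Cheney formula yields
\[
d(f(\omega_0),J)\le\sup_{\omega\in\Omega}d(f(\omega),J)=d(f,C(\Omega,J))<\|f\|=\|f(\omega_0)\|,
\]
and Theorem 1 applied to $J\subset Y$ at the unit vector $f(\omega_0)/\|f(\omega_0)\|$, then rescaled by $\|f(\omega_0)\|$, delivers the desired description of $\partial_{f(\omega_0)}$. The main obstacle is the weak$^\ast$-accumulation step in the third paragraph: reconciling the hypothesis stated in terms of $\partial_e Y^\ast_1$ in $Y^\ast$ with accumulation behaviour of $\partial_e C(\Omega,J)^\ast_1$ in $C(\Omega,J)^\ast$ through the $M$-ideal decomposition is the delicate point; everything else reduces to direct applications of Theorem 1, Theorem 7, and the Light--Cheney formula.
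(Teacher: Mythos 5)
Your strategy is the one the paper clearly intends (Light--Cheney plus the $M$-ideal $C(\Omega,J)\subset C(\Omega,Y)$, a Proposition 9-style accumulation argument, then Theorem 1), but the step you yourself flag as delicate is not merely delicate: it is a genuine gap, and your pivotal intermediate claim is in fact false under the stated hypotheses. The contradiction at the end of your third paragraph does not follow: $y^\ast_\ast\neq 0$ in $Y^\ast$ together with $y^\ast_\ast|_J=0$ is perfectly consistent, because a net in $\partial_e J^\ast_1$ (viewed in $Y^\ast$ via the unique norm-preserving extensions) can converge weak$^\ast$ to a nonzero element of $J^\bot$. Take $Y=c$, $J=c_0$ (an $M$-ideal): the coordinate functionals $\delta_n\in\partial_e (c_0^\ast)_1$ converge weak$^\ast$ in $c^\ast$ to the limit functional $L\in c_0^\bot$, while $0$ is not a weak$^\ast$-accumulation point of $\partial_e c^\ast_1$ (every accumulation point has the form $\lambda\delta_t$ with $|\lambda|=1$, $t\in {\bf N}\cup\{\infty\}$). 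Consequently $0$ \emph{is} a weak$^\ast$-accumulation point of $\partial_e C(\Omega,c_0)^\ast_1$ in $C(\Omega,c_0)^\ast$, since $(\delta(\omega)\otimes \delta_n)(g)=g(\omega)(n)\rightarrow 0$ for every $g\in C(\Omega,c_0)$; so clause (1) of Theorem 7 is simply unavailable. Worse, for the constant function $f\equiv (1,1,1,\dots)$ all hypotheses of the corollary hold --- identifying $C(\Omega,c)$ with $C(\Omega\times({\bf N}\cup\{\infty\}))$, both $\partial_f$ and the prescribed hull coincide with the probability measures on $\Omega\times({\bf N}\cup\{\infty\})$ --- and yet $d(f,C(\Omega,c_0))=1=\|f\|$. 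Thus the ``heart of the argument'', the strict contraction $d(f,C(\Omega,J))<\|f\|$, cannot be derived from the stated hypotheses; note the corollary's conclusion still holds in this example, so it is your route, not the statement, that breaks.

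What the hypothesis on $\partial_e Y^\ast_1$ does yield, by your own subnet computation, is the first half of the conclusion: take an extreme point $\Lambda$ of $\partial_f$, use Milman's theorem to write it as a weak$^\ast$ limit of a net $\delta(\omega_{\alpha})\otimes y^\ast_{\alpha}$ from the generating set, pass to subnets, and identify $\Lambda=\delta(\omega_0)\otimes y^\ast_0$ with $y^\ast_0\neq 0$; then $\Lambda(f)=\|f\|$ forces $\|f(\omega_0)\|=\|f\|$ (this is exactly the ``as before'' argument the paper records for $WC(\Omega,Y)$ immediately after the corollary). To obtain the description of $\partial_{f(\omega_0)}$ along your lines you need a non-accumulation condition at the level of $J$, for instance $0\notin\overline{\partial_e J^\ast_1}$ in the weak$^\ast$ topology of $J^\ast$ (or condition (2) of Theorem 7 for $C(\Omega,J)\subset C(\Omega,Y)$): with that, run your Proposition 9-style computation entirely inside $J^\ast$ --- pass to a weak$^\ast$-convergent subnet in $J^\ast_1$, not in $Y^\ast_1$ --- so the limit $\tilde{y}$ satisfies $\tilde{y}(j)=0$ for all $j\in J$, i.e. $\tilde{y}=0$ in $J^\ast$, which is the contradiction you want; then Light--Cheney and Theorem 1 finish exactly as you describe. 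As written, however, the reconciliation you hoped for between the $Y^\ast$-condition and accumulation behaviour in $C(\Omega,J)^\ast$ does not exist.
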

Let $WC(\Omega,Y)$ denote the space of continuous functions when $Y$ is considered with the weak topology, equipped with the supremum norm.
We recall that for $f \in WC(\Omega,Y)$, $\|f\|=Sup_{\omega \in \Omega,y^\ast \in \partial_e Y^\ast_1} |y^\ast(f(\omega))|$. Thus by an application of the separation theorem, $WC(\Omega,Y)^\ast_1 = \overline{CO}\{\delta(\omega)\otimes y^\ast:\omega \in \Omega~,~y^\ast \in \partial_e Y^\ast_1\}$. Now suppose $0$ is not a weak$^\ast$-accumulation point of $\partial_e Y^\ast_1$, then as before we can conclude that for $f \in WC(\Omega,Y)$, if $\Lambda \in \partial_e(\partial_f)$, then $\Lambda=\delta(\omega_0)\otimes y_0^\ast$ for some $\omega_0 \in \Omega$, $y_0^\ast \in \partial_e Y^\ast_1$. In particular, $\|f\|= \|f(\omega_0)\|$. We do not know how to prove a version of Corollary 14 in this set up.

\vskip 1em
The author thanks the referee for his/her remarks and the Journal administration for their efficient handling of this article during the pandemic.

\end{document}